\newtheorem{theorem}{Theorem}
\newtheorem{lemma}{Lemma}
\newtheorem{proposition}{Proposition}
\newtheorem{question}{Question}
\theoremstyle{definition}
\newcommand{\comment}[1]{}
\newcommand{\ignore}[1]{}
\def\clap#1{\hbox to 0pt{\hss#1\hss}}
  \def\moverlay{\mathpalette\mov@rlay}
  \def\mov@rlay#1#2{\leavevmode\vtop{%
    \baselineskip\z@skip \lineskiplimit-\maxdimen
    \ialign{\hfil$#1##$\hfil\cr#2\crcr}}}
\newcommand\A{\mathcal{A}}
\def\A{{\cal A}}
\newcommand{\remove}[1]{}
\begin{document}

\title{On the zone complexity of a vertex}

\author{
Shira Zerbib\thanks{Department of Mathematics,
      Technion---Israel Institute of Technology,
      Haifa 32000, Israel.
      \texttt{zgshira@tx.technion.ac.il}.
} }

\date{\today}

\maketitle

\begin{abstract}
Let $L$ be a set of $n$ lines in the real projective plane in
general position. We show that there exists a vertex $v\in \A(L)$
such that $v$ is positioned in a face of size at most 5 in the
arrangement obtained by removing the two lines passing through $v$.
\end{abstract}

\section{Introduction}
Let $L$ be a set of $n$ lines in the real projective plane in
general position (i.e., no three of which pass through the same
point), and let $\A(L)$ be the planar arrangement of vertices, edges
and faces determined by $L$. The \emph{zone} $Z(\ell)$ of a line
$\ell \in L$ is defined as the collection of faces supported by
$\ell$. The \emph{complexity} $C(\ell)$ of the zone of $\ell$ is the
sum of the sizes of all the faces in $\A(L\backslash \{\ell\})$
which contain a segment of $\ell$. The zone theorem asserts that
$C(\ell) \leq 5.5(n-1) + O(1)$, and that this bound is tight
\cite{1}. This was an improvement of the upper bound of $6(n-1)$
given in \cite{2} and \cite{3}.

A natural and interesting question that arises is whether the bound
given by the zone theorem can be improved on average. Namely, we
ask:

\begin{question}\label{aveZone}
Let $L$ be a set of $n$ lines in the real projective plane in
general position. Is it true that $\frac{1}{n}\sum_{\ell \in L}
C(\ell) \leq c(n-1)$, where $c$ is a positive constant strictly
smaller than 5.5?
\end{question}

Looking for supporting evidence to a positive answer to Question
\ref{aveZone}, we were naturally led to define the notion of the
zone complexity of a vertex. For a vertex $v \in \A(L)$, let
$\ell^v_1,\ell^v_2$ be the two lines passing through $v$. We define
the \emph{zone} $Z(v)$ of $v$ as the collection of four faces
containing $v$. The \emph{complexity} $C(v)$ of the zone of $v$ is
defined as the size of the face in the arrangement $\A(L \backslash
\{\ell^v_1,\ell^v_2\})$ which contains the position of $v$ in the
plane. Let $C(L)$ denote the minimal vertex zone complexity in
$\A(L)$.

An easy consequence of the zone theorem is that every line $\ell \in
L$ passes through a vertex $v$ with $C(v) \leq 7$ (see Section
\ref{zone} for a proof of this claim). This shows in particular that
$C(L) \leq 7$ for any arrangement $L$. Now, if the answer to
Question \ref{aveZone} is positive, it will similarly indicate that
there must be a vertex $v \in \A(L)$ such that $C(v) < 7$, i.e.,
$C(L) < 7$. Our main result shows that this is indeed the case,
namely we prove the following:

\begin{theorem}\label{main}
For every set $L$ of $n$ lines in the real projective plane in
general position, $C(L) \leq 5$.
\end{theorem}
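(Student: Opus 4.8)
The plan is to reduce $C(v)$, which is defined by deleting \emph{two} lines, to a statement about the zone of a \emph{single} line, and then to locate, in the zone of some line, two consecutive faces whose sizes sum to at most $9$.

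\textbf{Step 1 (reduction to a zone).} Write $|f|$ for the number of edges bounding a face $f$. Fix a vertex $v=\ell^v_1\cap\ell^v_2$ and delete only $\ell^v_1$. In $\A(L\setminus\{\ell^v_1\})$ the point $v$ lies in the interior of an edge $\sigma$ of $\ell^v_2$, and the two faces $F,F'$ incident to $\sigma$ are precisely the two faces of the zone $Z(\ell^v_1)$ through which $\ell^v_1$ passes on the two sides of $v$. Restoring $\ell^v_2$ cuts the face $f_v\in\A(L\setminus\{\ell^v_1,\ell^v_2\})$ that contains $v$ into $F$ and $F'$ along a single chord, and a chord crossing a face raises the total number of bounding edges by exactly $4$; hence
\[ C(v)=|F|+|F'|-4 . \]
So it suffices to exhibit a line $\ell$ and two faces consecutive along its zone, sharing an edge that $\ell$ crosses, with $|F|+|F'|\le 9$.

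\textbf{Step 2 (why averaging stops at $7$).} Traversing $\ell$ once, its zone faces $F_1,\dots,F_{n-1}$ are met one apiece and satisfy $\sum_i|F_i|=C(\ell)\le 5.5(n-1)+O(1)$; averaging the $n-1$ cyclic sums $|F_i|+|F_{i+1}|$ produces one that is at most $11+O(1/n)$, which is exactly the easy bound $C(L)\le 7$. Summing the chord relation over the whole arrangement yields the identities $\sum_{\ell}C(\ell)=\sum_{f}|f|^2-4n(n-1)$ and $\sum_{v}C(v)=\sum_{f}|f|^2-6n(n-1)$, and the zone theorem gives only $\sum_{f}|f|^2\le 9.5\,n(n-1)+O(n)$, i.e.\ average $C(v)\le 7$ again. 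Moreover $C(L)\ge 6$ would force every cyclic pair to be $\ge 10$, hence every zone total to be $\ge 5(n-1)$ and $\sum_{f}|f|^2\ge 9n(n-1)$; so a global argument would reduce the theorem to $\sum_{f}|f|^2<9n(n-1)$, that is, to average zone complexity below $5(n-1)$---a positive answer to the open Question~\ref{aveZone}. Since that is unavailable, the improvement from $7$ to $5$ must be \emph{local}.

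\textbf{Step 3 (local argument around a smallest face).} To force two consecutive \emph{small} zone faces I would work near a smallest face of $\A(L)$, necessarily a triangle $T$ with sides on lines $a,b,c$ and vertices $P=b\cap c$, $Q=a\cap c$, $R=a\cap b$. A useful rigidity is that the face across any side of $T$ has at least $4$ edges: two triangles cannot share an edge, for merging them across it would create a bigon bounded by only two lines. Deleting $a$ fuses $T$ with its neighbor $T_a$ into a single zone face $\widehat{T}$ of $a$ of size $|T_a|-1$, flanked in the zone of $a$ by the faces met across the edges of $c$ and of $b$ at $Q$ and $R$. Carrying this out for all three of $a,b,c$ and tracking the six flanking faces, I would argue that the only zone patterns avoiding a consecutive pair of total size $\le 9$ are the balanced-but-high ones (sizes $3,7,3,7,\dots$ or $5,5,5,\dots$), and that these cannot hold simultaneously around $a$, $b$ and $c$; some vertex among $P,Q,R$ then lies in a merged face of size at most $5$.

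\textbf{Main obstacle.} The heart of the matter is exactly this last point: upgrading the averaged bound $|F_i|+|F_{i+1}|\le 11$ to a genuine local bound $\le 9$, i.e.\ excluding the possibility that every line's zone independently realizes the near-extremal alternating pattern that the zone theorem still permits. The triangle $T$ pins down its immediate neighbors but says nothing about the faces two steps away along a zone, and it is the control of those second-neighbor sizes---presumably through a finite case analysis of the configurations around $T$, together with a separate check of small $n$---where the real difficulty lies.
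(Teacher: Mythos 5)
Your Step 1 is a correct and useful reformulation ($C(v)=|F|+|F'|-4$ for the two consecutive faces of the zone of $\ell^v_1$ that meet along the edge of $\ell^v_2$ containing $v$; this matches equation (4) of the paper), and Step 2 correctly diagnoses why the zone theorem alone cannot get below $7$. But the proof stops exactly where it would have to begin: Step 3 is a plan, not an argument. The assertions that ``the only zone patterns avoiding a consecutive pair of total size $\le 9$ are the balanced-but-high ones'' and that ``these cannot hold simultaneously around $a$, $b$ and $c$'' are never established, and you concede as much under ``Main obstacle.'' Worse, the strategy of working locally around a single smallest face is unlikely to close the gap: a triangle in a large arrangement can be flanked by three faces of size $10$ or more, in which case nothing in its immediate or second neighborhood forces a consecutive pair summing to $9$, and the tight example in Section \ref{example} (where \emph{every} vertex has $C(v)\ge 5$, with equality achieved only at some vertices) shows that no slack is available pointwise --- which is the usual signal that a purely local argument at one distinguished spot cannot succeed and a global accounting is required.

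The paper's proof is exactly such a global accounting, by discharging. Assuming $C(v)\ge 6$ for all $v$, i.e.\ $\sum_{f\in Z(v)}|f|\ge 18$, one assigns charge $k-3$ to each face of size $k$ and $-1$ to each vertex; Euler's formula makes the total charge $-6$. Each face then distributes its charge equally to its vertices, and an elementary arithmetic lemma shows that the only vertices left with negative charge are those whose four incident faces have size multiset $\{3,3,4,8\},\ldots,\{3,3,4,11\}$ or $\{3,3,5,7\}$. A second discharging step, from nonnegatively charged vertices to negatively charged neighbors (and opposite vertices of quadrilaterals), is then verified case by case to make every charge nonnegative, contradicting the total of $-6$. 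If you want to salvage your approach, the realistic route is to replace ``one smallest triangle'' by a sum over all vertices of the local deficiencies you are trying to exploit and to balance that sum against Euler's formula --- which is precisely the discharging argument.
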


Recall the standard representation of the real projective plane as a
quotient space of the sphere, where any two antipodal points are
identified. In this model, lines are represented by great circles on
the sphere, and the crossing point of two lines is the pair of
antipodal points at which the two respective great circles meet. The
notions of the zone of a vertex and its complexity are translated on
the sphere in an obvious way. Thus for a set $L$ of $n$ great
circles on the sphere in general position we can define the minimal
vertex complexity $C(L)$ as above. Translating Theorem \ref{main},
we can state our main result in the following equivalent way:

\begin{theorem}\label{main1}
For every set $L$ of $n$ great circles on the sphere in general
position, $C(L) \leq 5$.
\end{theorem}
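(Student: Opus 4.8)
The plan is to first replace the definition of $C(v)$ by a purely local quantity in the full arrangement $\A(L)$. Fix a vertex $v=\ell^v_1\cap\ell^v_2$, let $f_1,f_2,f_3,f_4$ be the four faces of $\A(L)$ meeting at $v$, and write $\Sigma(v)=\sum_{i=1}^4 |f_i|$ for the total number of their edges. Deleting $\ell^v_1,\ell^v_2$ merges exactly these four faces into the single face $F$ of $\A(L\setminus\{\ell^v_1,\ell^v_2\})$ whose size is $C(v)$; conversely, re-inserting the two lines cuts $F$ by two chords meeting at $v$. A short Euler count of this local picture — the two chords contribute eight edge-incidences to $\sum_i|f_i|$, while the four points where $\ell^v_1,\ell^v_2$ leave $F$ each split an edge of $F$ that must later be merged back — gives the identity $C(v)=\Sigma(v)-12$. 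Thus Theorem~\ref{main1} is equivalent to the statement that some vertex of $\A(L)$ has $\Sigma(v)\le 17$, i.e.\ is surrounded by four faces of average size at most $17/4$.

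Next I would record the global identities that any attempt must respect. Summing over all vertices and exchanging the order of summation yields $\sum_v\Sigma(v)=\sum_f |f|^2$, while Euler's formula on the projective plane gives $V=\binom n2$, $E=n(n-1)$, $F=1+\binom n2$, and the face relation $\sum_k (4-k)\,p_k=4$, where $p_k$ is the number of $k$-gonal faces. The latter forces an abundance of triangles, which is the engine of the argument: a triangle contributes only $3$ to each of the three $\Sigma$-values at its corners. The strategy is then to argue by contradiction. Assuming $C(v)\ge 6$, equivalently $\Sigma(v)\ge 18$, for every vertex, one looks at a smallest face $f_0$ of $\A(L)$ — necessarily a triangle by the face relation — and at the faces surrounding it: at each corner $p$ of $f_0$ the remaining three incident faces must total at least $15$, so every triangle is hemmed in by large faces. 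I would then try to turn this into a discharging scheme that moves the surplus $\Sigma(v)-16$ off the vertices and onto the faces, and contradicts the global budget imposed by $\sum_f|f|=2E$ together with $\sum_k(4-k)p_k=4$.

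I should flag clearly where the difficulty lies, because the two most natural routes fall short of the constant $5$. Averaging $C(v)$ over the vertices of a single line via the zone theorem only produces a consecutive pair of zone faces with $|g_i|+|g_{i+1}|\le 11$, hence merely the known bound $C(v)\le 7$; and averaging over all vertices gives $\tfrac1{\binom n2}\sum_v C(v)=\tfrac1{\binom n2}\sum_\ell C(\ell)-4$, which even under the conjectural answer to Question~\ref{aveZone} would not reach $5$. The reason both fail is structural: a large face satisfies the constraint $\Sigma(v)\ge 18$ for free at each of its own corners, so pure size-counting cannot generate a contradiction — the adversary is free to pile up a few enormous faces. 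The crux, and the step I expect to be genuinely hard, is therefore to exploit \emph{adjacency} rather than size alone: one must show that the large faces needed to shield every vertex cannot simultaneously surround every triangle, which I anticipate requires a careful case analysis of the possible local configurations around a minimal face (or around a vertex minimizing $\Sigma$) in order to extract the sharp value $5$ rather than $6$ or $7$.
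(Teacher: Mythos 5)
Your setup is sound and in fact coincides with the paper's: the identity $C(v)=\Sigma(v)-12$ is exactly the paper's equation relating $C(v)$ to $\sum_{f\in Z(v)}|f|$, the reduction to ``some vertex has $\Sigma(v)\le 17$'' is the same contrapositive the paper takes, the Euler-type deficit you compute is the paper's initial charge of $-6$ (it assigns $k-3$ to each $k$-face and $-1$ to each vertex), and discharging is indeed the method used. You have also correctly diagnosed why naive averaging cannot get below $7$. But everything after ``I would then try to turn this into a discharging scheme'' is a statement of intent, not a proof, and what is missing is precisely the entire mathematical content of the argument. Concretely, three things are absent. First, the discharging rules themselves: the paper has each $k$-face send $\frac{k-3}{k}$ to each of its vertices, and then has every vertex with nonnegative resulting charge split that charge equally among its deficient neighbors and deficient opposite vertices in quadrilaterals; your proposal does not commit to any rule (and even suggests the opposite direction, vertices to faces, without specifying how). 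Second, the classification of the deficient local configurations: an elementary but essential lemma shows that under the assumption $\Sigma(v)\ge 18$ a vertex ends up negative after the first discharging step if and only if its surrounding face-size multiset is one of $\{3,3,4,8\},\{3,3,4,9\},\{3,3,4,10\},\{3,3,4,11\},\{3,3,5,7\}$; without such a finite list there is nothing to do case analysis on. Third, the case analysis itself, which occupies most of the paper's Section 3: for each of the two configuration types one must bound $|V^{-}_{u}|$ for the surrounding vertices $u$ and verify numerically that the transferred charge covers the deficit ($-\tfrac18$ or $-\tfrac1{35}$) in every subcase. You flagged this step as ``genuinely hard'' and anticipated it would be needed --- that is correct, but flagging the gap does not close it.

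A secondary caution: your suggestion to localize around a single minimal triangle $f_0$ does not obviously lead anywhere, because the contradiction in the paper is global (the total charge is $-6<0$ while every individual final charge is shown to be $\ge 0$), not a local obstruction at one face. If you pursue your own scheme of pushing surplus onto faces, you will still need an analogue of the finite classification above, since the adversarial configurations $\{3,3,4,k\}$ and $\{3,3,5,7\}$ genuinely satisfy $\Sigma(v)\ge 18$ while being locally ``poor,'' and only charge imported from second-neighborhood vertices rescues them.
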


Theorem \ref{main1} is proved in Section \ref{proof}. Moreover, in
Section \ref{example} we give an example of a set of lines $L$ in
the real projective plane for which every vertex $v\in \A(L)$ has
$C(v) \geq 5$, which shows that the bound given in Theorem
\ref{main} is tight. Note that since the zone theorem gives a tight
upper bound on the line zone complexity, the upper bound given in
Theorem \ref{main} cannot be achieved using the zone theorem by
considering each line individually. Therefore Theorem \ref{main} may
suggest that the answer to Question \ref{aveZone} is positive.

\section{An upper bound on $C(L)$ using the zone theorem}\label{zone}

The notion of the zone complexity of a vertex is closely related to
that of the zone complexity of a line. In fact, the zone complexity
$C(\ell)$ of a line $\ell$ can be expressed in terms of the zone
complexities of the vertices that lie on $\ell$. In the following
proposition we calculate this relation and prove, using the zone
theorem, that $C(L) \leq 7$.

\begin{proposition}
For every set $L$ of $n$ lines in the real projective plane in
general position, $C(L) \leq 7$.
\end{proposition}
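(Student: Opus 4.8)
The plan is to express the line zone complexity $C(\ell)$ as a weighted sum of the vertex zone complexities $C(v)$ over the vertices $v$ lying on $\ell$, and then to combine this with the zone theorem via an averaging argument. I would fix a line $\ell\in L$ and first analyze a single vertex $v$ on it, with $\ell^v_2=\ell'$ the second line through $v$. In the arrangement $\A(L\setminus\{\ell\})$ the point $v$ lies in the relative interior of an edge of $\ell'$, and it borders exactly two faces, say $f$ and $g$, namely the two into which the two sub-segments of $\ell$ incident to $v$ pass; thus $f$ and $g$ are precisely the two consecutive faces of the zone of $\ell$ that meet at $v$. Deleting $\ell'$ as well merges $f$ and $g$ into the single face $F$ of $\A(L\setminus\{\ell^v_1,\ell^v_2\})$ containing $v$, so $C(v)=|F|$. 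Equivalently, re-inserting $\ell'$ cuts the convex face $F$ along a chord whose two endpoints, by general position, lie in the interiors of two distinct edges of $F$. The chord contributes one edge to each of $f$ and $g$, while splitting those two edges of $F$ adds two further edge-pieces, so $|f|+|g|=|F|+4$, giving the local identity
\begin{equation*}
C(v)=|f|+|g|-4.
\end{equation*}

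Next I would sum this identity over all $n-1$ vertices of $\ell$. Writing the faces of the zone of $\ell$ cyclically as $F_1,\dots,F_{n-1}$, with $\ell$ crossing from $F_j$ to $F_{j+1}$ at the vertex $v_j$ (indices mod $n-1$), the local identity reads $C(v_j)=|F_j|+|F_{j+1}|-4$. Each face $F_j$ is therefore counted in exactly two terms, so the sum telescopes into
\begin{equation*}
\sum_{v\in\ell}C(v)=2\sum_{j=1}^{n-1}|F_j|-4(n-1)=2\,C(\ell)-4(n-1),
\end{equation*}
where the last equality uses the standard property that $\ell$ meets each face of $\A(L\setminus\{\ell\})$ in at most one segment (the faces are convex), so the $F_j$ are distinct and their sizes sum exactly to $C(\ell)$.

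Finally I would bound the right-hand side using the zone theorem, $C(\ell)\le 5.5(n-1)+O(1)$, and average over the $n-1$ vertices of $\ell$:
\begin{equation*}
\min_{v\in\ell}C(v)\le\frac{1}{n-1}\sum_{v\in\ell}C(v)=\frac{2\,C(\ell)}{n-1}-4\le 7+\frac{O(1)}{n-1}.
\end{equation*}
Since each $C(v)$ is a positive integer, for all sufficiently large $n$ the right-hand side is strictly below $8$, forcing some vertex on $\ell$ to satisfy $C(v)\le 7$; the finitely many small cases are immediate, as the faces involved are then small. This yields $C(L)\le 7$.

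I expect the delicate point to be the local identity $C(v)=|f|+|g|-4$: one must verify the face-merging picture carefully and confirm, via general position, both that the chord endpoints fall in edge interiors (so the additive constant is exactly $4$) and that each zone face is crossed by $\ell$ exactly once (so that $\sum_j|F_j|=C(\ell)$ with no overcounting). The averaging and the passage from $7+O(1)/(n-1)$ to the integer bound $7$ are then routine.
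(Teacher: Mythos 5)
Your proof is essentially the paper's. The telescoping identity $\sum_{v\in\ell}C(v)=2\,C(\ell)-4(n-1)$ is exactly the paper's relation (4), $C(\ell)=\frac{1}{2}\sum_{v\in\ell}C(v)+2(n-1)$; the paper reaches it by double-counting the sizes of the four faces of $Z(v)$ in $\A(L)$ over the vertices of $\ell$, whereas you merge across $\ell$ first and count the two zone faces of $\ell$ at each vertex of $\A(L\setminus\{\ell\})$, but the bookkeeping (and your chord-splitting identity $|f|+|g|=|F|+4$) is equivalent. The only loose end is your endgame: the threshold for ``sufficiently large $n$'' in $7+O(1)/(n-1)<8$ depends on the unspecified constant, while your fallback for small $n$ (every face of an arrangement of $n-2$ lines has at most $n-2$ edges, so $C(v)\le n-2$) only covers $n\le 9$, so for an arbitrary constant the two regimes need not meet. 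The paper closes this by noting that the additive constant in the zone theorem of \cite{1} is in fact $-1$, which yields $\min_{v\in\ell}C(v)\le 7-\frac{2}{n-1}<7$ for every $n$ at once; quoting that explicit constant (any constant at most $4$ would do) completes your argument.
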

\begin{proof}
 We write $v \in \ell$ if $v$ lies
on $\ell$ (that is, if $\ell$ passes through $v$), and let $|f|$ be
the size of a face $f$. The relation between the sizes of the faces
in $Z(\ell)$ and the sizes of the faces in the zones of the vertices
that lie on $\ell$ is given in the following equation:
\begin{equation}\label{1}
    \sum_{v\in \ell} \sum_{f \in Z(v)} |f| = 2\sum_{f\in Z(\ell)}|f|.
\end{equation}

Observe that for every vertex $v \in \A(L)$,
\begin{equation}\label{1.1}
    \sum_{f \in Z(v)} |f| = C(v) + 12.
\end{equation}
Indeed, since $v$ contributes 4 to $\sum_{f \in Z(v)} |f|$ and each
of the 4 vertices adjacent to $v$ contributes 2 to $\sum_{f \in
Z(v)} |f|$, $v$ and its neighbors contribute 12.

Now, since the number of vertices that lie on $\ell$ is $n-1$,
(\ref{1.1}) implies
\begin{equation}\label{2}
    \sum_{v\in \ell}\sum_{f\in Z(v)}|f| = \sum_{v \in \ell}C(v) + 12(n-1).
\end{equation}
On the other hand, it is easy to see that
\begin{equation}\label{3}
    \sum_{f\in Z(\ell)} |f| = C(\ell) + 4(n-1).
\end{equation}
Therefore, combining (\ref{1}), (\ref{2}) and (\ref{3}) we get the
following relation between the zone complexity of a line $\ell$ and
the zone complexities of the vertices that lie on it:
\begin{equation}\label{4}
    C(\ell) = \frac{1}{2}\sum_{v\in \ell} C(v) + 2(n-1).
\end{equation}

Finally, set $r(\ell) := min_{v\in \ell} C(v)$. From (\ref{4}) and
the zone theorem we get that
\begin{equation}\label{5}
  \frac{1}{2}(n-1)\cdot r(\ell) + 2(n-1) \leq  C(\ell) \leq 5.5(n-1) +
  O(1).
\end{equation}
Since the constant $O(1)$ in the zone theorem is actually negative
(it equals $-1$; see the proof of Theorem 1 in \cite{1}), it follows
from (\ref{5}) that $r(\ell) \leq 7$ for all $n$. Hence every line
in $L$ passes through a vertex $v$ with $C(v)\leq 7$, and in
particular $C(L) \leq 7$, as claimed.
\end{proof}

\section{Proof of Theorem \ref{main1}}\label{proof}

Throughout the proof we use the term \emph{4-multiset} to describe a
multiset of cardinality 4. We begin by proving an elementary
technical lemma which will play a crucial role in the sequel.

\begin{lemma}\label{negative multisets}
Let $K=\{k_1,k_2,k_3,k_4\}$ be a 4-multiset of integers with the
following properties:
\begin{enumerate}
  \item $k_i \geq 3$ for $i = 1,2,3,4$.
  \item At most two of the elements in $K$ equal 3.
  \item $\sum_{i=1}^4 k_i \geq 18.$
\end{enumerate}
Then $\sum_{i=1}^4 \frac{k_i-3}{k_i} < 1 $ if and only if $K$ is one
of the following 4-multisets:
$$\{3,3,4,8\}, \ \{3,3,4,9\}, \ \{3,3,4,10\}, \ \{3,3,4,11\}, \ \{3,3,5,7\}.$$
\end{lemma}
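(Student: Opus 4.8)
The plan is to first recast the inequality in a far more transparent form. Writing $\frac{k-3}{k} = 1 - \frac{3}{k}$, I get
\begin{equation*}
\sum_{i=1}^4 \frac{k_i-3}{k_i} = 4 - 3\sum_{i=1}^4 \frac{1}{k_i},
\end{equation*}
so the condition $\sum_{i=1}^4 \frac{k_i-3}{k_i} < 1$ is exactly equivalent to $\sum_{i=1}^4 \frac{1}{k_i} > 1$. The entire lemma thus becomes a statement about which $4$-multisets satisfying properties 1--3 have reciprocal sum exceeding $1$, and I would work with this unit-fraction condition from here on.

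The backward (\emph{if}) direction is then a direct check: for each of the five listed multisets one verifies $\sum 1/k_i > 1$ (e.g.\ $\{3,3,5,7\}$ gives $106/105$) and confirms properties 1--3, which is immediate. For the forward (\emph{only if}) direction I would assume, without loss of generality, the ordering $k_1 \le k_2 \le k_3 \le k_4$, and split into cases according to how many of the $k_i$ equal $3$; by property 2 this number is $0$, $1$, or $2$.

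The case of no $3$'s is dispatched immediately: with every $k_i \ge 4$ we have $\sum 1/k_i \le 1$, with equality only for $\{4,4,4,4\}$, so the strict inequality never holds. For exactly one $3$ (so $k_2,k_3,k_4 \ge 4$), the requirement $\frac{1}{k_2}+\frac{1}{k_3}+\frac{1}{k_4} > \frac23$ forces $k_2 = 4$ and then $k_3 = 4$, leaving only $\{3,4,4,4\}$ and $\{3,4,4,5\}$; but these have element-sums $15$ and $16$, so they fail property 3 and are excluded. For exactly two $3$'s (so $k_3,k_4 \ge 4$), the condition reads $\frac{1}{k_3}+\frac{1}{k_4} > \frac13$ together with $k_3+k_4 \ge 12$ from property 3; here $k_3 \ge 6$ is impossible, the value $k_3 = 5$ pins $k_4 = 7$, and $k_3 = 4$ forces $8 \le k_4 \le 11$. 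This yields precisely the five multisets in the statement.

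The work is entirely elementary, and the only real obstacle is bookkeeping: I must ensure the enumeration in each case is exhaustive and that the two numerical constraints---the reciprocal-sum inequality and the element-sum bound of property 3---are applied \emph{together}, since it is exactly their interaction that trims the candidate list to these five and, in particular, removes the near-misses $\{3,4,4,4\}$ and $\{3,4,4,5\}$ that satisfy the reciprocal condition but violate property 3.
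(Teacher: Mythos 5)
Your proposal is correct and follows essentially the same route as the paper: both reduce the inequality to $\sum_{i=1}^4 1/k_i > 1$ and then run an elementary case analysis (the paper shows directly that $k_2\ge 4$ forces $\sum 1/k_i\le 1$, hence $k_1=k_2=3$, while you organize the same bookkeeping by the number of entries equal to $3$). The enumeration in your two-threes case matches the paper's and is exhaustive.
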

\begin{proof}
Let $K=\{k_1,k_2,k_3,k_4\}$ be a 4-multiset with the above
properties. Then $\sum_{i=1}^4 \frac{k_i-3}{k_i} < 1$ if and only if
$\sum_{i=1}^4 \frac{1}{k_i} > 1$. Change the order in $K$ such that
$k_1\leq k_2 \leq k_3 \leq k_4$. It follows from Property (2) that
$k_3 \geq 4$. Moreover, $k_4 \geq 6$ or $k_3 \geq 5$, since
otherwise $\sum_{i=1}^4 k_i \leq 4+4+4+5 = 17$ in contradiction to
Property (3). If $k_2 \geq 4$ we have
$$\sum_{i=1}^4 \frac{1}{k_i} \leq \frac{1}{3} + \frac{1}{4} +
\frac{1}{4} + \frac{1}{6}= 1.$$ Thus $\sum_{i=1}^4 \frac{1}{k_i} >
1$ implies $k_1=k_2=3$. It is now straightforward to verify that
there are only 5 possibilities for $\{k_3,k_4\}$: $\{4,8\}$,
$\{4,9\}$, $\{4,10\}$, $\{4,11\}$ or $\{5,7\}$, as claimed.

\comment{For such a multiset $K=\{k_1,k_2,k_3,k_4\}$ denote  %the old proof
$$\Sigma K := \sum_{i=1}^4 k_i, \ \ \ \ \widehat{K} :=
\sum_{i=1}^4 \frac{k_i-3}{k_i}.$$ Let $T$ be the set of all the
4-multisets with the properties of the lemma. By Property 3
$$T = \bigcup_{r\geq 18} T_r,$$ where $T_r$ is the finite set
$$T_r = \{K : K \in T, \ \Sigma K=r\}.$$ We claim that $min\{ \widehat{K}:K \in T_r\} < min\{
\widehat{K}:K \in T_{r+1}\}$. Indeed, take $K=\{k_1,k_2,k_3,k_4\}
\in T_{r+1}$ such that $\widehat{K}$ is minimal, and change the
order in $K$ such that $k_1\leq k_2 \leq k_3 \leq k_4$. Then $\Sigma
K \geq 18$ implies $k_4 > 4$. Therefore the 4-multiset
$K':=\{k_1,k_2,k_3,k_4-1\}$ is in $T_r$. Observe that the inequality
$\widehat{K'} < \widehat{K}$ holds, therefore,
$$min\{ \widehat{K}:K \in T_r\} \leq \widehat{K'} < \widehat{K} =  min\{
\widehat{K}:K \in T_{r+1}\}.$$ Now, for a given $r$, calculating
$min\{ \widehat{K}:K \in T_r\}$ is an easy task since $T_r$ is
finite. Thus one can easily verify that $min\{\widehat{K}:K \in
T_{22}\} = 1$, which immediately implies $\widehat{K} \geq 1$ for
all the 4-multisets $K$ such that $\Sigma K \geq 22$. Therefore all
the 4-multisets $K$ such that $\widehat{K} < 1$ are in the finite
(and rather small) set
$$\bigcup_{r=18}^{21} T_r.$$ A quick check shows that such a 4-multiset
has to be one of the five given in the lemma.}

\end{proof}

Let $L$ be a set of $n$ great circles in general position on the
sphere. In order to prove that $C(L) \leq 5$ we need to show that
there exists a vertex $v \in \A(L)$ such that $C(v) \leq 5$. Assume
to the contrary that every vertex $v \in \A(L)$ has $C(v) \geq 6$.
By (\ref{1.1}), this assumption is equivalent to
\begin{equation}\label{6}
    \sum_{f\in Z(v)} |f| \geq 18
\end{equation}
for every $v$.

Denote by $V$ the number of vertices, by $E$ the number of edges,
and by $F$ the number of faces in the planar arrangement $\A(L)$. By
Euler's formula, we have $V - E + F = 2$. For every $k\geq 3$ denote
by $f_k$ the number of faces in $\A(L)$ of size $k$. We observe that
$4V = 2E = \sum kf_k$ and $F = \sum f_k$. Therefore,
\begin{equation}\label{7}
-6 = -3V + E + 2E - 3F = -3V + 2V + \sum kf_k - 3\sum f_k = -V +
\sum (k-3)f_k.
\end{equation}

We are going now to use the discharging method. The discharging
method is a technique often used to prove statements in structural
graph theory, and is commonly applied in the context of planer
graphs (for a review on the discharging method and some of its
applications see \cite{RT}, \cite{ABKPR08}). Our plan is to assign
to every face and vertex of the arrangement $\A(L)$ an initial
charge, such that the sum of all assigned charges is negative. Then
we redistribute (discharge) the charges in two steps, such that
after these two steps every face and vertex in $\A(L)$ will have a
nonnegative charge. It will thus follow that the total initial
charge is nonnegative, which is a contradiction.

\textbf{Step 1 [initial charging]:} We begin by assigning a charge
$w_1(\cdot)$ to the faces and vertices of the arrangement $\A(L)$:
The charge of a face of size $k$ is $k-3$, while the charge of each
vertex is $-1$. It follows from (\ref{7}) that the overall charge is
$-6$.

\textbf{Step 2 [discharging the faces]:} For every $k\geq 3$, every
face $f$ of size $k$ contributes a charge of $$\frac{w_1(f)}{k} =
\frac{k-3}{k}$$ to each of its $k$ vertices. After this step the
charge of each face is 0. Denote by $w_2(\cdot)$ the charge of the
vertices after Step 2.

For a vertex $v\in \A(L)$ denote by $K_v$ the 4-multiset of the
sizes of the faces in $Z(v)$.

\begin{proposition}\label{negative vertices}
Let $v$ be a vertex in $\A(L)$. Then $w_2(v)<0$ if and only if $K_v$
is one of the following: $\{3,3,4,8\}, \ \{3,3,4,9\}, \
\{3,3,4,10\}, \ \{3,3,4,11\}$ or $\{3,3,5,7\}.$
\end{proposition}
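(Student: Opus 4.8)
The plan is to express $w_2(v)$ explicitly in terms of the face sizes in $Z(v)$ and then invoke Lemma~\ref{negative multisets} directly. Write $K_v=\{k_1,k_2,k_3,k_4\}$ for the 4-multiset of sizes of the faces meeting at $v$. The vertex $v$ starts with charge $-1$ after Step~1, and in Step~2 each incident face of size $k_i$ sends it a charge of $\frac{k_i-3}{k_i}$. Since $v$ lies on exactly four faces, we have
\[
w_2(v) = -1 + \sum_{i=1}^4 \frac{k_i-3}{k_i},
\]
so the condition $w_2(v)<0$ is precisely $\sum_{i=1}^4 \frac{k_i-3}{k_i} < 1$.

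The next step is to verify that $K_v$ satisfies the three hypotheses of Lemma~\ref{negative multisets}. Property~(1), namely $k_i\geq 3$, holds because every face in a simple line arrangement in general position has at least three sides (each face is bounded by at least three edges). Property~(3), $\sum_{i=1}^4 k_i\geq 18$, is exactly the standing assumption~(\ref{6}) derived from supposing $C(v)\geq 6$ for all vertices. The remaining point is Property~(2), that at most two of the $k_i$ equal $3$; this is the one genuinely geometric input and is the main obstacle.

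I expect the crux to be ruling out three or four triangular faces meeting at a single vertex. The key observation is that the four faces around $v$ occur in cyclic order and are separated by the two lines $\ell^v_1,\ell^v_2$ through $v$; opposite faces lie across a vertex of the local configuration. If three of the four faces were triangles, two of these triangles would be adjacent around $v$ (sharing one of the four edges emanating from $v$), and a short case analysis of how two triangles can share such an edge forces a third line of $L$ to pass through a point already determined by two others, contradicting general position (no three lines concurrent). I would carry this out by fixing $v$ as the apex, labelling the four edges and the two additional bounding edges of each candidate triangle, and showing that the incidences among these bounding lines violate the no-three-concurrent condition.

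Once Property~(2) is established, the proof is immediate: $K_v$ meets all hypotheses of Lemma~\ref{negative multisets}, and that lemma states that $\sum_{i=1}^4\frac{k_i-3}{k_i}<1$ holds if and only if $K_v$ is one of the five listed 4-multisets. Combined with the identity $w_2(v)=-1+\sum_{i=1}^4\frac{k_i-3}{k_i}$, this yields exactly the claimed characterization of the vertices with $w_2(v)<0$, completing the proof.
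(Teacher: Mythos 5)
Your proof follows essentially the same route as the paper: the identity $w_2(v)=-1+\sum_{i}\frac{k_i-3}{k_i}$, reduction to Lemma~\ref{negative multisets}, with condition~(\ref{6}) giving Property~(3) and the geometric fact that at most two of the four faces around $v$ are triangles giving Property~(2). One small caveat on your argument for Property~(2): when two triangles adjacent around $v$ share the edge $e\subset\ell_1$ from $v$ to $w$, the third bounding lines $a$ and $b$ of the two triangles both pass through $w$, so either $a\neq b$ and three lines are concurrent (your contradiction), or $a=b$ and both triangles are bounded by the same three lines $\ell_1,\ell_2,a$ --- a configuration that genuinely occurs when $n=3$ and is only excluded because a fourth line would have to cross one of the two triangles; the paper handles this by discarding $n\leq 3$ at the outset, and you should do the same.
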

\begin{proof}
If we exclude the case $n \leq 3$ (for which Theorem \ref{main1} is
trivial), two faces of size 3 cannot share an edge. Hence there are
at most two faces of size 3 in $Z(v)$. In addition, by (\ref{6}),
$\sum_{f\in Z(v)} |f| \geq 18$. Therefore the 4-multiset $K_v$
satisfies the conditions of Lemma \ref{negative multisets}. We
deduce that $\sum_{f\in Z(v)} \frac{|f|-3}{|f|} < 1$ if and only if
$K_v$ is one of the five 4-multisets listed in the proposition. The
result follows now since
$$w_2(v) = -1 + \sum_{f\in Z(v)} \frac{|f|-3}{|f|}.$$
\end{proof}

We say that two vertices $v$ and $u$ are \emph{neighbors} if
$\{v,u\}$ is an edge in $\A(L)$.

For a vertex $u \in \A(L)$ such that $w_2(u) \geq 0$, denote by
$V^{-}_u$ the set of all vertices $v$ in $\A(L)$ with the following
two properties:
\begin{enumerate}
  \item $w_2(v) < 0$, and
  \item $v$ and $u$ are neighbors, or $v$ and $u$ are opposite vertices
        in a face of size 4.
\end{enumerate}

\textbf{Step 3 [discharging vertices with positive charge]:} A
vertex $u\in L$ such that $w_2(u) \geq 0$ and $V^{-}_u \neq \phi$
contributes a charge of $\frac{w_2(u)}{|V^{-}_u|}$ to each one of
the vertices in $V^{-}_u$. Denote by $w_3(\cdot)$ the charge of the
vertices after Step 3. The next proposition completes the proof of
Theorem \ref{main1}:

\begin{proposition}\label{all is nonnegative}
For every vertex $v \in \A(L)$, $w_3(v) \geq 0$.
\end{proposition}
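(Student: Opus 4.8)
The plan is to reduce the proposition to a statement about the \emph{bad} vertices—those with $w_2(v)<0$—classified in Proposition~\ref{negative vertices}. The first observation is that a vertex can only \emph{receive} charge in Step~3 if it satisfies condition~(1) in the definition of $V^-_u$, i.e.\ only if it is bad. Hence a vertex $u$ with $w_2(u)\ge 0$ is never a recipient, and in Step~3 it either keeps its charge (when $V^-_u=\emptyset$) or distributes exactly $w_2(u)$ in total and ends with $w_3(u)=0$; either way $w_3(u)\ge 0$ automatically. It therefore remains to show that each bad vertex $v$ receives at least its deficit $-w_2(v)$. These deficits are tiny: by Proposition~\ref{negative vertices} they equal $1/8,\,1/12,\,1/20,\,1/44,\,1/35$ for the five types $\{3,3,4,8\},\{3,3,4,9\},\{3,3,4,10\},\{3,3,4,11\},\{3,3,5,7\}$, so the largest possible deficit is $1/8$.

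Next I would record the local structure forced at a bad vertex $v$. Since (excluding $n\le 3$) no two triangular faces share an edge, the two size-$3$ faces at $v$ are \emph{opposite} around $v$, and therefore flank the large face $F$ of $v$ (of size $k\in\{7,\dots,11\}$): both triangles of $v$ are adjacent to $F$ along the two $F$-edges meeting at $v$. Writing the lines through $v$ as $\ell^v_1,\ell^v_2$ and the lines carrying the opposite edges of the two triangles as $\ell_3,\ell_4$, the two apex vertices $b=\ell^v_2\cap\ell_3$ and $d=\ell^v_1\cap\ell_4$ (the third vertices of the triangles) are exactly the two neighbours of $v$ lying on its size-$4$ face, whose fourth vertex is $w=\ell_3\cap\ell_4$. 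Throughout I would use the standing assumption (\ref{6}) that \emph{every} vertex $u$ satisfies $\sum_{f\in Z(u)}|f|\ge 18$, which converts directly into lower bounds on the sizes of the faces around the neighbours of $v$.

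The heart of the argument is to show that each bad $v$ draws in enough charge from its at most five potential donors—its four neighbours and the vertex $w$ opposite it in the size-$4$ face. Using (\ref{6}) together with the fact that at most one of the two faces on the far side of any neighbour can be a triangle (those two faces share an edge), one obtains quantified richness bounds. For instance, when $k=8$ the neighbour $a=\ell^v_1\cap\ell_3$ on $F$ has $w_2(a)\ge 1/8$ whenever it carries no second triangle, and in every case $w_2(a)>0$ unless $a$ is itself bad; likewise an apex such as $b$ has $w_2(b)\ge 1/14$ whenever it carries no second triangle. I would then run through the five types, and within each type through the possible labels (bad versus non-bad) of the neighbours and apexes, summing the shares $w_2(u)/|V^-_u|$ that reach $v$ and checking in every case that the total is at least the deficit. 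The size-$4$-opposite clause in the definition of $V^-_u$ is exactly what supplies the extra donor $w$ needed to close the four types possessing a size-$4$ face, while the remaining type $\{3,3,5,7\}$ has the very small deficit $1/35$ and can be handled through its neighbours alone.

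The main obstacle is that the apexes and the $F$-neighbours of $v$ may themselves be bad, so that bad vertices \emph{cluster}. Two neighbouring bad vertices always share the triangle lying across their common edge, so along $F$ the bad vertices form chains separated by shared triangles, and a single rich donor can then belong to several sets $V^-_u$ and must split its charge. The delicate point is to control $|V^-_u|$ and $w_2(u)$ simultaneously. I expect the key to be that along any maximal chain of bad vertices on $F$ the shared apex is forced to be rich: being adjacent to two size-$4$ faces it carries only one triangle, so it is not bad, and by (\ref{6}) its surplus is bounded below (of the order of $1/14$). One must then bound how many bad vertices a given rich donor is adjacent to, or opposite to in a size-$4$ face, so that each recipient's share still covers its $\le 1/8$ deficit. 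Establishing this recipient-count bound in the worst clustered configurations—rather than merely locating one rich donor—is where the real work lies.
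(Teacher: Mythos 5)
Your setup is exactly the paper's: vertices with $w_2\ge 0$ are trivially fine, the five bad types have deficits at most $\tfrac18$, the donors are the four neighbours plus the opposite vertex of the size-$4$ face, and the preliminary estimates you do state (the deficits $\tfrac18,\tfrac1{12},\tfrac1{20},\tfrac1{44},\tfrac1{35}$; the $\tfrac18$ and $\tfrac1{14}$ lower bounds on donors carrying a single triangle; the fact that adjacent bad vertices share a triangle) are all correct and all appear, in one form or another, in the paper's argument. But the proposition is a purely quantitative statement, and the entire quantitative content --- the verification that in \emph{every} configuration the charge actually reaching $v$ covers its deficit --- is deferred. You say you ``would then run through the five types \dots{} checking in every case,'' and you end by identifying the recipient-count bound in clustered configurations as ``where the real work lies.'' That is an accurate diagnosis, but it means the proof is not given: what is missing is precisely the paper's subcase analysis (Subcases 1.1--1.4 and 2.1--2.3), in which the bounds $|V^{-}_{u_i}|\le 2$ or $3$ are derived from the local face sizes (e.g.\ that $k_1,k_2\ge 4$ always, so $u_1,u_2,u_5$ are never bad in Case 1; that $k_7\ge 5$ kills all candidates from $f_7$, while $k_7=3$ or $4$ leaves at most two or one), and the resulting sums $w_2(v)+\sum_i w_2(u_i)/|V^{-}_{u_i}|$ are checked to be nonnegative.

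Two concrete places where a generic ``find one rich donor'' argument of the kind you sketch would not close: (i) in the worst clustering (both large-face neighbours bad, forcing $k_3=k_4=4$, $k_5=k_6=3$), the only usable donor is the opposite vertex $u_5$ of the $4$-face, and one must show both $w_2(u_5)\ge\tfrac{11}{28}$ and $|V^{-}_{u_5}|\le 3$ to beat the deficit $\tfrac18$ --- your chain/apex heuristic does not identify this donor or its recipient count; (ii) in the subcase where neither large-face neighbour is bad but $k_5=k_6=k_7=3$, the naive minimization over $k_1+k_3\ge 11$, $k_2+k_4\ge 11$ gives a total that is \emph{not} sufficient, and the paper has to enumerate the admissible quadruples $\{k_1,k_2,k_3,k_4\}$ more finely to get the sum of $\frac{k_i-3}{k_i}$ strictly above $\tfrac{33}{20}$. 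So the proposal is the right plan, correctly motivated, but the step that makes or breaks the proposition is left unexecuted.
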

\begin{proof}
Clearly, $w_2(v) \geq 0$ implies $w_3(v) \geq 0$. We show that
$w_3(v) \geq 0$ also in the case $w_2(v) < 0$. Let $v\in \A(L)$ be a
vertex such that $w_2(v) < 0$. By Proposition \ref{negative
vertices}, $K_v$ is one of the following: $\{3,3,4,8\}, \
\{3,3,4,9\}, \ \{3,3,4,10\}, \ \{3,3,4,11\}$ or $\{3,3,5,7\}$. We
split into two cases:

\textbf{Case 1.} $K_v = \{3,3,4,k\}$ for some $8\leq k \leq 11$. In
this case
$$w_2(v) \geq -1 + \frac{1}{4} + \frac{5}{8} = -\frac{1}{8}.$$

%\comment{
\begin{figure}[ht]
\begin{center}
\input{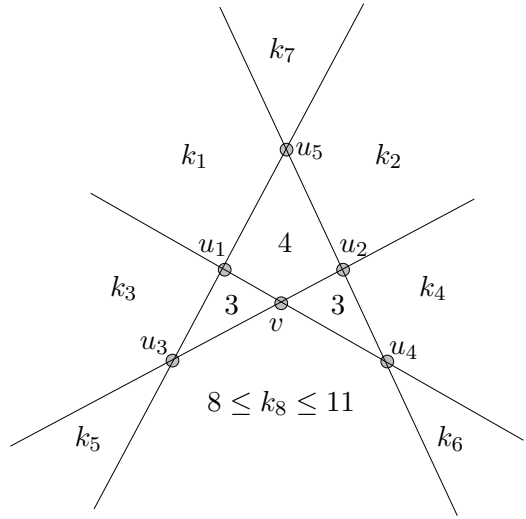}
\caption{The local neighborhood of $v$ in Case 1.} \label{fig1}
\end{center}
\end{figure}
%}

Figure \ref{fig1} illustrates the local neighborhood of $v$, i.e.,
$k_i$ ($i=1,\dots,8$) denote the sizes of the faces $f_i$ in this
neighborhood, and $u_i$ ($i=1,\dots,5$) denote the vertices. Note
that $k_3 \geq 4$ and $k_4 \geq 4$ because $f_3$ and $f_4$ share an
edge with a face of size 3. Moreover, $k_1 \geq 4$. Indeed, if
$k_1=3$ then $f_1$ is supported by the antipodal vertex of $u_4$,
hence $L$ contains only 4 great circles which is a contradiction to
$k_8 \geq 8$ (see Figure \ref{fig3}). A symmetric argument yields
$k_2 \geq 4$. Therefore, by Proposition \ref{negative vertices},
$w_2(u_1) \geq 0$, $w_2(u_2) \geq 0$ and $w_2(u_5) \geq 0$, as the
zone of each contains at most one face of size 3. Observe that $u_1,
u_2, u_3, u_4$ are neighbors of $v$, and that $u_5$ and $v$ are
opposite vertices in a face of size 4. Therefore, for every $1\leq i
\leq5$, if $w_2(u_i) \geq 0$ then $v\in V^{-}_{u_i}$.

%\comment{
\begin{figure}[ht]
\begin{center}
\input{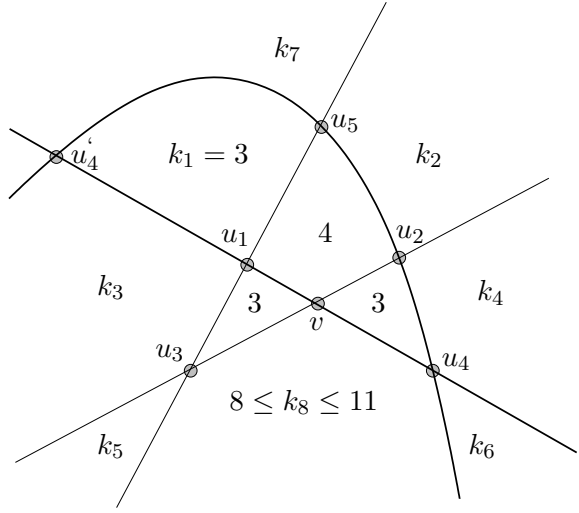}
\caption{If $k_1=3$ the antipodal vertex of $u_4$ supports
$f_1$.}\label{fig3}
\end{center}
\end{figure}
%}

We shell now consider separately 4 possible subcases:

\textbf{Subcase 1.1.} $w_2(u_3) < 0$ and $w_2(u_4) < 0$. By
Proposition \ref{negative vertices}, $k_5=k_6=3$ and $k_3=k_4=4$.
Hence, by (\ref{6}), we have $k_1 \geq 7$ and $k_2 \geq 7$.
Therefore,
$$w_2(u_5) \geq -1 + \frac{1}{4} + \frac{4}{7} + \frac{4}{7} =
\frac{11}{28}.$$

We claim that $|V^{-}_{u_5}| \leq 3$. Indeed, since $v \in
V^{-}_{u_5}$ we have to show that there are at most two more
vertices in $V^{-}_{u_5}$. We have $w_2(u_1), w_2(u_2) \geq 0$, and
hence $u_1, u_2 \notin V^{-}_{u_5}$. Moreover, $k_1, k_2 \geq 7$, so
$k_1, k_2 \neq 4$, and thus none of the vertices in $f_1$ and $f_2$
is an opposite vertex to $u_5$ in a face of size 4. There are three
options: If $k_7=3$ then at most two vertices of $f_1$ are in
$V^{-}_{u_5}$ (the two neighbors of $u_5$), and if $k_7 = 4$ then at
most one vertex of $f_1$ is in $V^{-}_{u_5}$ (the opposite vertex to
$u_5$). In both cases the neighbors of $u_5$ have a positive weight
by Proposition \ref{negative vertices}, and hence they are not in
$V^{-}_{u_5}$. Finally, if $k_7 \geq 5$ none of the vertices of
$f_1$ is in $V^{-}_{u_5}$, and the claim is proved (see Figure
\ref{case11fig}).

%\comment{
\begin{figure}[ht]
\begin{center}
\input{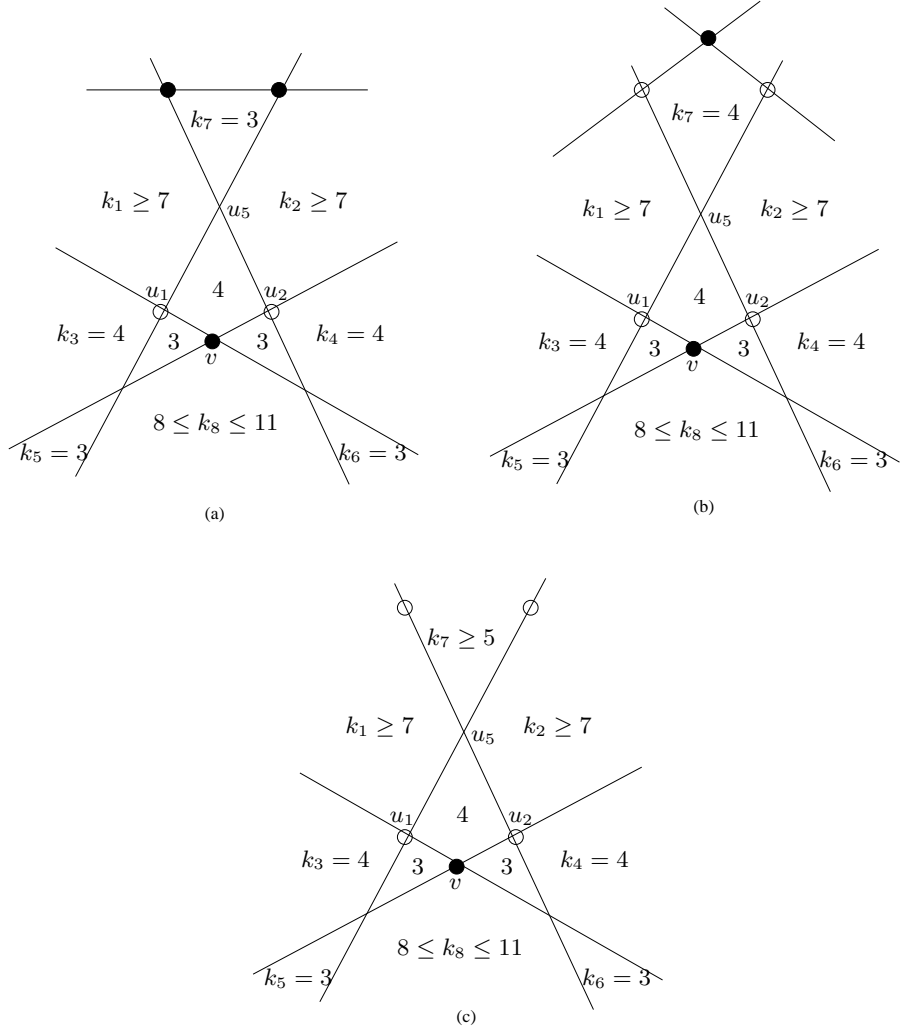}
\caption{Three different possible scenarios in Subcase 1.1, in which
$|V^{-}_{u_5}| \leq 3$. Vertices with $w_2(v) \geq 0$ are marked by
empty circles, while vertices $v$ which might have $w_2(v) < 0$ are
marked by full black circles.}\label{case11fig}
\end{center}
\end{figure}
%}

Since $v \in V^{-}_{u_5}$, we have
$$w_3(v) \geq w_2(v) + \frac{w_2(u_5)}{|V^{-}_{u_5}|} \geq -\frac{1}{8} + \frac{1}{3} \cdot
\frac{11}{28} = \frac{1}{168} > 0,$$ as claimed.

\textbf{Subcase 1.2.} $w_2(u_3) < 0$ and $w_2(u_4) \geq 0$. By
Proposition \ref{negative vertices}, $k_5=3$ and $k_3=4$, and hence
by (\ref{6}), $k_1 \geq 7$. Similar arguments to those in Subcase
1.1 yield $|V^{-}_{u_1}| \leq 3$, $|V^{-}_{u_2}| \leq 2$,
$|V^{-}_{u_4}| \leq 3$ and $|V^{-}_{u_5}| \leq 3$. Since $v \in
V^{-}_{u_1}\cap V^{-}_{u_2}\cap V^{-}_{u_4}\cap V^{-}_{u_5}$, we
have
\begin{eqnarray}\label{8}
\nonumber \lefteqn{w_3(v) = w_2(v) + \frac{w_2(u_1)}{|V^{-}_{u_1}|}
+ \frac{w_2(u_2)}{|V^{-}_{u_2}|} + \frac{w_2(u_4)}{|V^{-}_{u_4}|}+
\frac{w_2(u_5)}{|V^{-}_{u_5}|}  }
\\ \nonumber &\geq&  -\frac{1}{8} +
\frac{1}{3}\Big(-1 + \frac{1}{4} + \frac{1}{4} + \frac{4}{7}\Big) +
\frac{1}{2}\Big(-1 + \frac{1}{4} + \frac{k_2-3}{k_2} +
\frac{k_4-3}{k_4}\Big)
\\ \nonumber & & + \frac{1}{3}\Big(-1 + \frac{5}{8} +
\frac{k_4-3}{k_4} + \frac{k_6-3}{k_6}\Big) + \frac{1}{3}\Big(-1 +
\frac{1}{4} + \frac{4}{7} + \frac{k_2-3}{k_2} +
\frac{k_7-3}{k_7}\Big)
\\ &=&  -\frac{111}{168} + \frac{5}{6}\Big(\frac{k_2-3}{k_2} +
\frac{k_4-3}{k_4}\Big) + \frac{1}{3}\Big(\frac{k_6-3}{k_6} +
\frac{k_7-3}{k_7}\Big).
\end{eqnarray}
By (\ref{6}) $k_2+k_4 \geq 11$. Therefore the right hand side of
(\ref{8}) is minimal when $k_6=k_7=3$ and $\{k_2,k_4\} = \{4,7\}$.
We conclude that $$w_3(v) \geq -\frac{111}{168} +
\frac{5}{6}\Big(\frac{1}{4} + \frac{4}{7}\Big) = \frac{1}{42} > 0,$$
as claimed.

\textbf{Subcase 1.3.} $w_2(u_3) \geq 0$ and $w_2(u_4) < 0$. This
subcase is symmetric to Subcase 1.2.

\textbf{Subcase 1.4.} $w_2(u_3) \geq 0$ and $w_2(u_4) \geq 0$. Here
$|V^{-}_{u_1}| \leq 2$, $|V^{-}_{u_2}| \leq 2$, $|V^{-}_{u_3}| \leq
3$, $|V^{-}_{u_4}| \leq 3$, and $|V^{-}_{u_5}| \leq 3$. Since $v \in
\bigcap_{i=1}^5 V^{-}_{u_i}$, we have
\begin{eqnarray}\label{9}
\nonumber \lefteqn{w_3(v) = w_2(v) + \frac{w_2(u_1)}{|V^{-}_{u_1}|}
+ \frac{w_2(u_2)}{|V^{-}_{u_2}|} + \frac{w_2(u_3)}{|V^{-}_{u_3}|} +
\frac{w_2(u_4)}{|V^{-}_{u_4}|}+ \frac{w_2(u_5)}{|V^{-}_{u_5}|}  }
\\ \nonumber &\geq&  -\frac{1}{8} +
\frac{1}{2}\Big(-1 + \frac{1}{4} + \frac{k_1-3}{k_1} +
\frac{k_3-3}{k_3}\Big) + \frac{1}{2}\Big(-1 + \frac{1}{4} +
\frac{k_2-3}{k_2} + \frac{k_4-3}{k_4}\Big)
\\ \nonumber & & + \frac{1}{3}\Big(-1 + \frac{5}{8} + \frac{k_3-3}{k_3} +
\frac{k_5-3}{k_5}\Big) + \frac{1}{3}\Big(-1 + \frac{5}{8} +
\frac{k_4-3}{k_4} + \frac{k_6-3}{k_6}\Big)
\\ \nonumber & & + \frac{1}{3}\Big(-1 +
\frac{1}{4} + \frac{k_1-3}{k_1} + \frac{k_2-3}{k_2} +
\frac{k_7-3}{k_7}\Big)
\\ &=&  -\frac{11}{8} + \frac{5}{6}\Big(\frac{k_1-3}{k_1} + \frac{k_2-3}{k_2}
+ \frac{k_3-3}{k_3} + \frac{k_4-3}{k_4}\Big) +
\frac{1}{3}\Big(\frac{k_5-3}{k_5} + \frac{k_6-3}{k_6} +
\frac{k_7-3}{k_7}\Big).
\end{eqnarray}

Consider the expression
\begin{eqnarray}\label{9.9}
\frac{k_1-3}{k_1} + \frac{k_2-3}{k_2} + \frac{k_3-3}{k_3} +
\frac{k_4-3}{k_4}.
\end{eqnarray}
Recall that from (\ref{6}), $k_1+k_3 \geq 11$ and $k_2+k_4 \geq 11$,
and that $k_i \geq 4$ for $i=1,2,3,4$. Thus, if at least one of
$k_5$, $k_6$, $k_7$ is greater than or equal to 4, (\ref{9.9})
attains its minimum when $\{k_1,k_3\} = \{k_2,k_4\} = \{4,7\}$.
Therefore, in (\ref{9}),
$$w_3(v) \geq -\frac{11}{8} +
\frac{5}{6}\Big(\frac{1}{4}+\frac{4}{7}+\frac{1}{4}+\frac{4}{7}\Big)
+ \frac{1}{3}\cdot\frac{1}{4} = \frac{13}{168} > 0,$$ as claimed.
Now, assume $k_5 = k_6 = k_7 = 3$. As $w_2(u_3) \geq 0$ and
$w_2(u_4) \geq 0$, we have that $k_3\geq 5$ and $k_4\geq 5$ by
Proposition \ref{negative vertices}. Moreover, since $k_7=3$,
(\ref{6}) implies also that $k_1+k_2 \geq 11$. Thus,
$\{k_1,k_2,k_3,k_4\}$ has to be one of the following: $\{k_1 \geq
4,k_2\geq 7,k_3\geq 7,k_4\geq 5\}$, \linebreak $\{k_1\geq 5,k_2\geq
6,k_3\geq 6,k_4\geq 5\}$, $\{k_1\geq 6,k_2\geq 5,k_3\geq 5,k_4\geq
6\}$, $\{k_1\geq 6,k_2\geq 6,k_3\geq 5,k_4\geq 5\}$, or $\{k_1\geq
7,k_2 \geq 4,k_3\geq 5,k_4\geq 7\}$. A direct calculation shows that
in each one of these options (\ref{9.9}) is strictly greater than
$\frac{33}{20}$. Therefore, in (\ref{9}),
$$w_3(v) > -\frac{11}{8} +
\frac{5}{6}\cdot \frac{33}{20} = 0,$$ as claimed.

This completes the proof in Case 1.

\textbf{Case 2.} $K_v = \{3,3,5,7\}$. In this case
$$w_2(v) = -1 + \frac{2}{5} + \frac{4}{7} = -\frac{1}{35}.$$

%\comment{
\begin{figure}[ht]
\begin{center}
\input{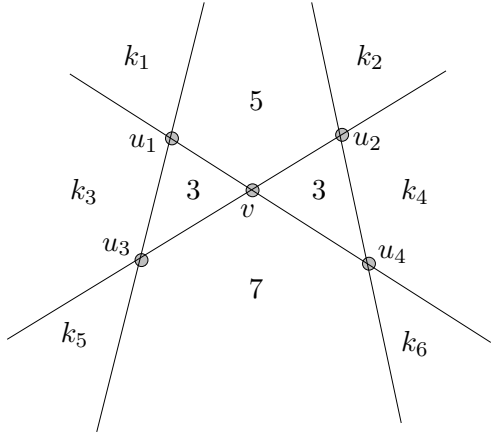}
\caption{The local neighborhood of $v$ in Case 2.} \label{fig2}
\end{center}
\end{figure}
%}

Figure \ref{fig2} illustrates the local neighborhood of $v$, i.e.,
$k_i$ ($i=1,\dots,6$) denote the sizes of the faces $f_i$ in this
neighborhood, and $u_i$ ($i=1,\dots,4$) denote the vertices. Observe
that $u_1, u_2, u_3, u_4$ are neighbors of $v$. Therefore, for every
$1\leq i \leq 4$, if $w_2(u_i) \geq 0$ then $v\in V^{-}_{u_i}$.

We shell show that the charge that $v$ receives in Step 3 from the
neighbors to its left ($u_1$ and/or $u_3$) is greater than
$\frac{1}{70}$ . Symmetric arguments will show that $v$ also
receives such a charge from the vertices to its right ($u_2$ and/or
$u_4$). Therefore, we shell get that
$$w_3(v) > w_2(v) + 2\cdot \frac{1}{70} = -\frac{1}{35} +
\frac{1}{35} = 0,$$ as required.

To this end we consider 3 possible subcases:

\textbf{Subcase 2.1.} $w_2(u_1) < 0$ and $w_2(u_3) \geq 0$. By
Proposition \ref{negative vertices}, $k_1 = 3$ and $k_3 = 7$. Hence,
$$w_2(u_3) \geq -1 + \frac{4}{7} + \frac{4}{7} = \frac{1}{7}.$$ Moreover,
$|V^{-}_{u_3}| \leq 4$ and $v \in V^{-}_{u_3}$. Therefore in Step 3
the charge that $u_3$ contributes to $v$ is
$$\frac{w_2(u_3)}{|V^{-}_{u_3}|} \geq \frac{1}{4} \cdot \frac{1}{7}
= \frac{1}{28} > \frac{1}{70}.$$

\textbf{Subcase 2.2.} $w_2(u_1) \geq 0$ and $w_2(u_3) < 0$. By
Proposition \ref{negative vertices}, $k_3 = 5$ and $k_5 = 3$, hence
(\ref{6}) implies $k_1 \geq 5$. Thus,
$$w_2(u_1) \geq -1 + \frac{2}{5} + \frac{2}{5} + \frac{2}{5} =
\frac{1}{5}.$$ Observe that $|V^{-}_{u_1}| \leq 4$ and that $v \in
V^{-}_{u_1}$. Therefore in Step 3 $u_1$ contributes to $v$ a charge
of
$$\frac{w_2(u_1)}{|V^{-}_{u_1}|} \geq \frac{1}{4} \cdot \frac{1}{5}
= \frac{1}{20} > \frac{1}{70},$$ as well.

\textbf{Subcase 2.3.} $w_2(u_1) \geq 0$ and $w_2(u_3) \geq 0$. We
first claim that $|V^{-}_{u_1}| \leq 3$. Indeed, since $v \in
V^{-}_{u_1}$ we have to show that there are at most two more
vertices in $V^{-}_{u_1}$. We have $w_2(u_3) \geq 0$, thus $u_3
\notin V^{-}_{u_1}$. Consider three options: If $k_1 = 3$ then by
(\ref{6}) and Proposition \ref{negative vertices}, $k_3 \geq 8$, and
therefore there are at most two more vertices in $V^{-}_{u_1}$ (the
two neighbors of $u_1$ that belong to $f_1$). If $k_1 = 4$ then by
Proposition \ref{negative vertices}, $k_3 \geq 6$, and hence there
is at most one more vertex in $V^{-}_{u_1}$ (the opposite vertex to
$u_1$ in $f_1$). Finally, consider the option $k_1 \geq 5$. Note
that $k_3 \geq 4$ since $f_3$ shares an edge with a face of size 3.
Therefore $f_1$ does not contribute any vertex to $V^{-}_{u_1}$.
Thus, there is at most one more vertex in $V^{-}_{u_1}$ (the
opposite vertex to $u_1$ in $f_3$, in case $k_4 = 4$). This proves
the claim, and similar arguments yield $|V^{-}_{u_3}| \leq 3$, as
well (see Figure \ref{case23fig}).

%\comment{
\begin{figure}[ht]
\begin{center}
\input{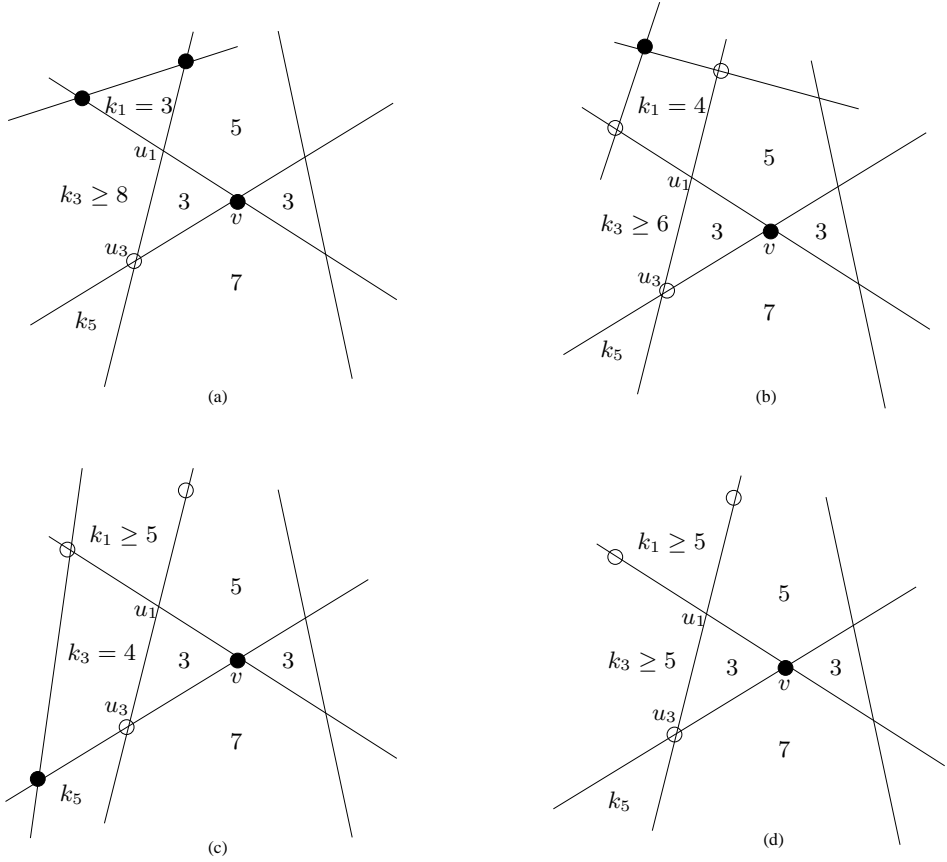}
\caption{Four different possible scenarios in Subcase 2.3, in which
$|V^{-}_{u_1}| \leq 3$. Vertices with $w_2(v) \geq 0$ are marked by
empty circles, while vertices $v$ which might have $w_2(v) < 0$ are
marked by full black circles.} \label{case23fig}
\end{center}
\end{figure}
%}

Next, we have $v \in V^{-}_{u_1}\cap V^{-}_{u_3}$, and therefore in
Step 3 $u_1$ and $u_3$ contribute to $v$ an overall charge of
\begin{eqnarray} \label{10}
\nonumber \frac{w_2(u_1)}{|V^{-}_{u_1}|} +
\frac{w_2(u_3)}{|V^{-}_{u_3}|} &\geq& \frac{1}{3}\Big(-1+
\frac{2}{5} + \frac{k_1-3}{k_1} + \frac{k_3-3}{k_3}\Big) +
\frac{1}{3}\Big(-1+ \frac{4}{7} +
\frac{k_3-3}{k_3} + \frac{k_5-3}{k_5}\Big) \\
    &=& -\frac{12}{35} + \frac{2}{3}\Big(\frac{k_3-3}{k_3}\Big) +
\frac{1}{3}\Big(\frac{k_1-3}{k_1} + \frac{k_5-3}{k_5}\Big)
\end{eqnarray}
Let us show that the right hand side of (\ref{10}) is greater than
$\frac{1}{70}$. Note that $k_3 \geq 4$, since $f_3$ shares an edge
with a face of size 3. There are 4 possible options:

\begin{enumerate}
  \item If $k_3 = 4$ then by (\ref{6}), $k_1 \geq 6$ and $k_5 \geq 4$.
Therefore the right hand side of (\ref{10}) is at least
$$-\frac{12}{35} + \frac{2}{3}\cdot \frac{1}{4} +
\frac{1}{3}\Big(\frac{1}{2} + \frac{1}{4}\Big) = \frac{31}{420} >
\frac{1}{70}.$$
  \item If $k_3 = 5$ then (\ref{6}) implies $k_1 \geq 5$ and by
Proposition \ref{negative vertices}, $k_5 \geq 4$. Thus, the right
hand side of (\ref{10}) is at least
$$-\frac{12}{35} + \frac{2}{3}\cdot \frac{2}{5} +
\frac{1}{3}\Big(\frac{2}{5} + \frac{1}{4}\Big) = \frac{59}{420} >
\frac{1}{70}.$$
  \item If $k_3 = 6$, we have from (\ref{6}) that $k_1 \geq 4$. Here the
right hand side of (\ref{10}) is at least
$$-\frac{12}{35} + \frac{2}{3}\cdot \frac{3}{6} +
\frac{1}{3}\cdot\frac{1}{4} = \frac{31}{420} > \frac{1}{70}.$$
  \item Finally, if $k_3 \geq 7$, the right hand side of (\ref{10}) is
at least
$$-\frac{12}{35} + \frac{2}{3}\cdot \frac{4}{7} = \frac{4}{105} >
\frac{1}{70}.$$
\end{enumerate}

To complete the proof in Case 2, note that the subcase where both
$w_2(u_1) < 0$ and $w_2(u_3) < 0$ is not possible, since by
Proposition \ref{negative vertices} it yields $5 = k_3 = 7$, which
is absurd.

\end{proof}

This concludes the proof of Theorem \ref{main1}.

\section{The upper bound of 5 on $C(L)$ is tight}\label{example}
In Figure \ref{fig4} we give an example of a set $L$ of 10 lines in
the real projective plane in general position, such that every
vertex $v \in \A(L)$ has $C(v) \geq 5$. The planar arrangement
$\A(L)$ has 45 vertices, 30 faces of size 3, 6 faces of size 5 and
10 faces of size 6. There are two types of vertices in $\A(L)$: 30
vertices $v$ with $K_v = \{3,3,5,6\}$ for which $C(v)=5$, and 15
vertices $v$ with $K_v = \{3,3,6,6\}$ for which $C(v)=6$. This
example shows that the upper bound of 5 on $C(L)$ given in Theorem
\ref{main1} is tight.

\begin{figure}[ht]
\begin{center}
\input{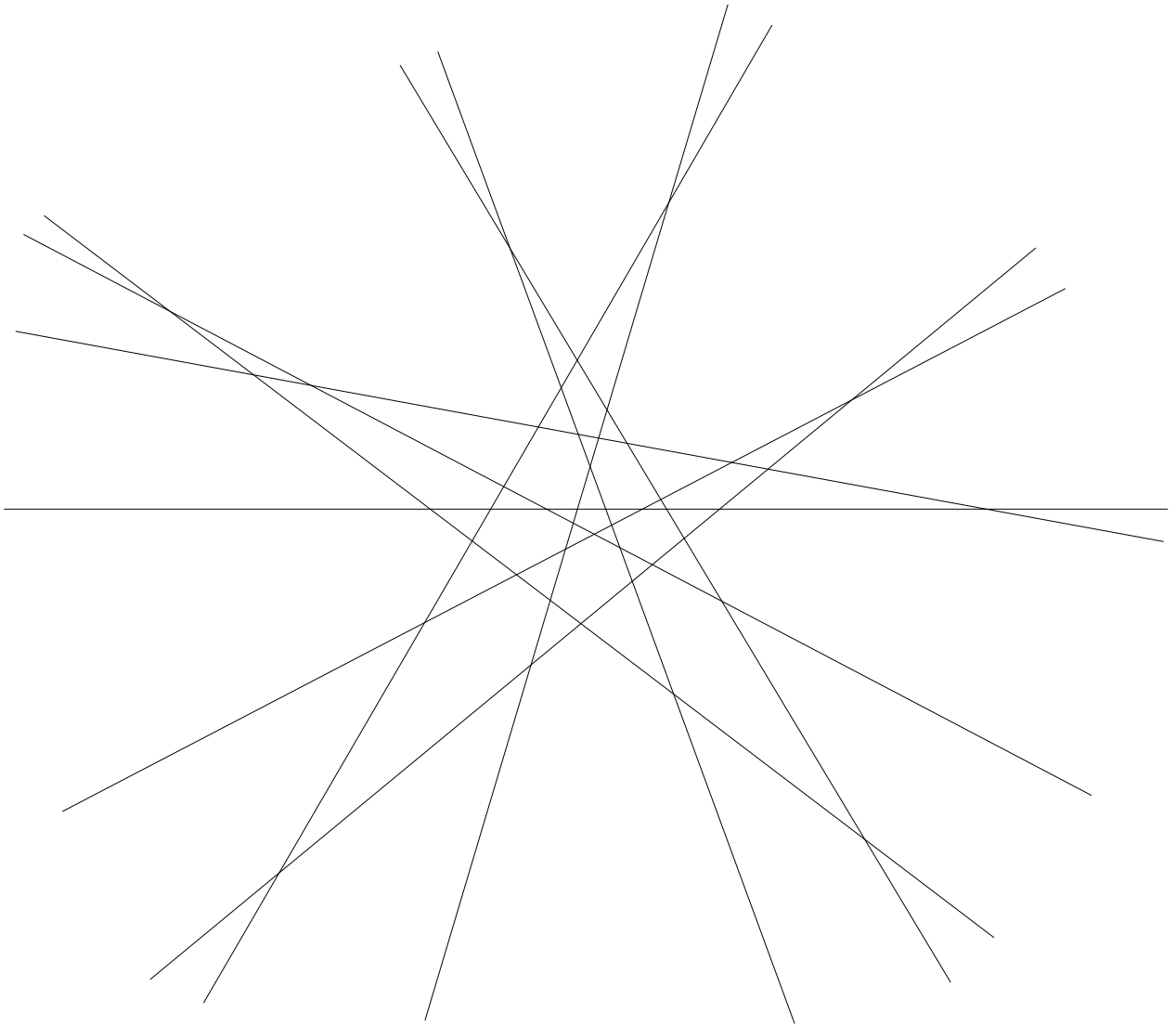}
\caption{A set $L$ of 10 lines such that every vertex $v\in \A(L)$
has $C(v) \geq 5$.} \label{fig4}
\end{center}
\end{figure}

\noindent \textbf{Remark.} The example in Figure \ref{fig4} is the
only example we found of an arrangement $L$ with $C(L)=5$. It would
be interesting to know whether there are general such examples, or
that the upper bound given in Theorem \ref{main} can actually be
improved for large enough $n$.

\noindent \textbf{Acknowledgments.} The author is grateful to Rom
Pinchasi for many valuable discussions.


\begin{thebibliography}{DGNO}
\bibitem
[1]{ABKPR08} E. Ackerman, K. Buchin, C. Knauer, R. Pinchasi, and G.
Rote, There are not too many Magic Configurations, \emph{Disc.\ and
Comp.\ Geom. 39}, 2008, 3--16.

\bibitem
[2]{1} M. Bern, D. Eppstein, P. Plassmann and F. Yao, Horizon
theorems for lines and polygons, in \emph{Discrete and Computational
Geometry: Papers from the DIMACS Special Year} (J. E. Goodman, R.
Pollack and W. L. Steiger, editors), AMS, Providence, RI, 1991, pp.
45–66.

\bibitem
[3]{2} B. Chazelle, L. Guibas and D. T. Lee, The power of geometric
duality, \emph{BIT 25}, 1985, pp. 76-90.

\bibitem
[4]{3} H. Edelsbrunner, J. O'Rourke and R. Seidel, Constructing
arrangements of lines and hyperplanes with applications, in
\emph{SIAM J. Comput. 15}, 1986, pp. 341-363.

\bibitem
[5]{RT} R. Radoi\v{c}i\'{c}, G. T\'{o}th, The discharging method in
combinatorial geometry and its application to Pach-Sharir conjecture
on intersection graphs, in: J.E. Goodman, J. Pach, J. Pollack
(Eds.), Proceedings of the Joint Summer Research Conference on
Discrete and Computational Geometry, Contemporary Mathematics, AMS,
Providence, 2008, pp. 319-342.


\end{thebibliography}
\end{document}